
\documentclass{IEEEtran4PSCC}
\ifCLASSINFOpdf
  \usepackage[pdftex]{graphicx}
\else
  \usepackage[dvips]{graphicx}
\fi
%
%

%
\usepackage[caption=false]{subfig}
\usepackage{float}
\usepackage[cmex10]{amsmath}
\usepackage{amsfonts}
\usepackage{amssymb}
\usepackage[usenames,dvipsnames]{xcolor}
\usepackage{amsthm}
\newtheorem{theorem}{Theorem}[section]

\newtheorem{assumption}{Assumption}

\DeclareMathOperator*{\cost}{cost}
\DeclareMathOperator*{\st}{s.t.:}
\usepackage{booktabs}
\usepackage{algorithm}
\usepackage{algorithmic}
\usepackage{color}
\usepackage[urlcolor=true]{hyperref}
\usepackage{cleveref}
\usepackage{lipsum}

\usepackage{pgfplots}
\pgfplotsset{compat=newest}
\usepackage{tikz}
\usetikzlibrary{calc}
\usetikzlibrary{arrows, arrows.meta}
%

\newcommand{\cE}{{\cal E}}

\newcommand{\cP}{{\cal P}}
\newcommand{\cN}{{\cal N}}
\newcommand{\cG}{{\cal G}}

\hyphenation{op-tical net-works semi-conduc-tor}

\usepackage{amssymb}
\usepackage{amsmath}
\makeatletter
\let\old@ps@headings\ps@headings
\let\old@ps@IEEEtitlepagestyle\ps@IEEEtitlepagestyle
\def\psccfooter#1{%
  \def\ps@headings{%
    \old@ps@headings%
    \def\@oddfoot{\strut\hfill#1\hfill\strut}%
    \def\@evenfoot{\strut\hfill#1\hfill\strut}%
  }%
  \def\ps@IEEEtitlepagestyle{%
    \old@ps@IEEEtitlepagestyle%
    \def\@oddfoot{\strut\hfill#1\hfill\strut}%
    \def\@evenfoot{\strut\hfill#1\hfill\strut}%
  }%
  \ps@headings%
}
\makeatother

\begin{document}
%
\title{Importance Sampling Approach to Chance-Constrained DC Optimal Power Flow}

\author{
\IEEEauthorblockN{Aleksander Lukashevich, Vyacheslav Gorchakov, Petr Vorobev}
\IEEEauthorblockA{Center for Energy Systems and Technology\\
Skolkovo Institute of Science and Technology\\
Skokovo Innovation Center, Moscow Region, Russia\\
\{aleksandr.lukashevich, v.gorchakov, p.vorobev\}@skoltech.ru}
\and
\IEEEauthorblockN{Deepjyoti Deka, Yury Maximov}
\IEEEauthorblockA{Theoretical Division, T-5 group, \\
Los Alamos National Laboratory\\
Los Alamos, New Mexico, USA\\
\{deepjyoti, yury\}@lanl.gov}
}



\maketitle

\begin{abstract}
Despite significant economic and ecological effects, a higher level of renewable energy generation leads to increased uncertainty and variability in power injections, thus compromising grid reliability. In order to improve power grid security, we investigate a joint chance-constrained (CC) direct current (DC) optimal power flow (OPF) problem. The problem aims to find economically optimal power generation while guaranteeing that all power generation, line flows, and voltages simultaneously remain within their bounds with a pre-defined probability. Unfortunately, the problem is computationally intractable even if the distribution of renewables fluctuations is specified. Moreover, existing approximate solutions to the joint CC OPF problem are overly conservative, and therefore have less value for the operational practice. This paper proposes an importance sampling approach to the CC DC OPF problem, which yields better complexity and accuracy than current state-of-the-art methods. The algorithm efficiently reduces the number of scenarios by generating and using only the most important of them, thus enabling real-time solutions for test cases with up to several hundred buses. 
\end{abstract}

\begin{IEEEkeywords}
robust optimization, chance-constrained optimization, optimal power flow
\end{IEEEkeywords}


\section{Introduction}

In 2020 electricity produced approximately 25\% of greenhouse gas emissions in the USA, and integration of a higher volume of renewable energy generation is seen as the primary tool to reduce the emission level. In turn, a higher amount of renewable generation increases the power grid uncertainty, compromises its security, and challenges classical power grid operation and planning policies. 

The optimal power flow (OPF) problem, which determines the economically optimal operating level of power generation under given power balance equations and security constraints, is one of the most fundamental problems in grid operation and planning. Several extensions are proposed for the optimal power flow problem for addressing the uncertainty of power generation and consumption. Robust and chance-constrained power flow formulations are among the most popular ones. The robust OPF problem assumes bounded uncertainty and requires a solution to be feasible against any possible uncertainty realization within a given uncertainty set~\cite{ben2002robust,ding2016adjustable,sousa2010robust}. 

A more flexible chance-constrained approach requires security constraints to be satisfied a high probability while assuming the distribution of renewables is known in whole or in part \cite{lubin2015robust, roald2017chance,bienstock2014chance,pena2020dc}. This paper considers a {\it joint chance-constrained} optimal power flow problem, where the joint probability of at least one failure of the security constraints (line load limits, voltage stability bounds) is bounded from above by a confidence threshold (JCC-OPF). 

In contrast to a {\it single chance-constrained} formulation (SCC-OPF), which imposes individual failure probability thresholds for each of the constrains, the joint chance constraint is computationally hard even for (linear) direct current (DC) power balance equations, linear security limits and Gaussian uncertainty~\cite{cousins2014cubic,khachiyan1993complexity}. Several tractable convex approximations have been proposed \cite{nemirovski2007convex, nemirovski2006scenario,nemirovski2003tractable,trofino1999bi} for joint chance-constrained optimization to overcome the computational hardness of the problem; however, they often lead to conservative solutions inapplicable for operational practice. 

Other approaches such as scenario and sample average approximations \cite{calafiore2006scenario,garatti2019risk, vrakopoulou2013probabilistic,nemirovski2006scenario} consist of substituting the stochastic part with a set of deterministic inequalities based on the uncertainty realization. This approach can be distributionally robust and allow exploiting uncertainties beyond the Gaussian ones. A combination of an analytical approximation and sampling~\cite{hou2020chance} can further improve the accuracy of the solution. However it may require a large number of samples. Scenario curation/modification heuristics have been designed to improve the sample complexity of JCC-OPF \cite{mezghani2020stochastic}, although with formal analysis of the methodology. In work outside power grids, statistical learning has been used to approximate uncertain convex programs~\cite{vapnik1999overview,maximov2016tight,campi2020scenario}.

Nevertheless, the scenario approximation approach remains the most accurate among algorithms for solving the joint chance-constrained DC optimal power flow, although its complexity is often unacceptable for large-scale power grids~\cite{sakhavand2020new}. To this end, the paper suggests using importance sampling to reduce the complexity and improve the accuracy of the scenario approximation to chance-constrained optimal power flow. Importance sampling allows generating more informative samples and results in an optimization problem with fewer constraints. 

The contribution of the paper is as follows. 
\begin{enumerate}
  \item we propose a novel computationally efficient 
  approach to the joint chance-constrained DC OPF problem. The algorithm exploits physics-informed importance sampling to refine the classical scenario approximation~\cite{calafiore2006scenario};
  \item we prove the algorithm to converge to a sub-optimal solution with a guaranteed accuracy under mild technical assumptions; 
  \item we demonstrate the proposed algorithm's high time efficiency and accuracy over multiple real and synthetic test cases. 
\end{enumerate}

The paper is organized as follows.
In Section~\ref{sec:not} we present the joint chance-constrained optimal power flow problem and introduce notation used in the paper. We outline the algorithm and provide its theoretical analysis in Section~\ref{sec:algo}. Empirical study and comparison to state-of-the-art methods are given in Section~\ref{sec:emp}. In Section~\ref{sec:conclusion} we conclude with a brief summary and discussion on possible applications of our results.

\section{Background and Problem Setup}\label{sec:not}

\subsection{Notation}
The direct current (DC) power flow model remains extremely popular yet simple for the analysis because of a linear relation between powers and phase angles that is typical high-voltage power grids. 

In the following, we consider a power grid given by a graph $\cG = (V, E)$ with a set of nodes/buses $V$ and edges/lines $E$. Assume $n$ is a number of buses, and $m$ is a number of lines. Let $p$ be a vector of power injections $p = (p_F, p_R, p_S)^\top$, where $p_F$ corresponds to buses with deterministic/fixed (F) power injections, $p_R$ to buses with random (R) injections, and $p_S$ is the injection at the slack bus (S). The power system is balanced, i.e. the sum of all power injections equals to zero, $p_S + \sum_{r\in R} p_r + \sum_{f\in F} p_f = 0$. Let $\theta$ be a vector of phase angles. Without a loss of generalization, We assume that the phase angle on the reference slack bus $\theta_S = 0$. Let $B \in \mathbb{R}^{N \times N}$ be an admittance matrix of the system, $p = B\theta$. The components $B_{ij}$ are such that $B_{ij} \neq 0$ if there is a line between nodes $i$ and $j$, the diagonal elements $B_{ii} = - \sum_{j\neq i} B_{ij}$, i.e. $B$ is a Laplacian matrix. Let $B^{\dagger}$ be the pseudo-inverse of $B$, i.e.~$\theta = B^{\dagger} p$. 

The DC power flow equations, generation and stability constraints are
\begin{align}
    & p = B \theta \label{eq:DC-PF-a}\\
    & p^{\min}_i \leq p_i \leq p^{\max}_i, i \in V \label{eq:DC-PF-b}\\
    & |\theta_i - \theta_j| \leq \bar{\theta}_{ij}, \; (i,j)\in E \label{eq:DC-PF-c}
\end{align}
Let $A \in \{-1,0,1 \}^{m \times n}$ be the incidence matrix of grid $G$, i.e. if nodes $i$ and $j$ are connected by edge $k$ then $A_{ki} = +1$, $A_{jk} = -1$ and all other elements are equal to zero. Then the phase angle constraints in \eqref{eq:DC-PF-c} can be represented as  $AB^{\dagger}p \le \bar\theta$, $-AB^{\dagger}p \le \bar\theta$. 

Let $C \in \mathbb{R}^{n\times n}$ be a symmetric matrix such that for all random nodes $r$, fixed nodes $f$ and the slack node $S$, $C_{rr} = 1$, $C_{ff} = 1$, $C_{Sr} = - 1$, $C_{Sf} = - 1$, while all other elements are equal to zero. The Dc OPF constraints are given by the following system of inequalities 
\[
\underbrace{(AB^\dagger C, - A B^\dagger C, C, -C)}_{W}\!\!{}^\top p \le \underbrace{(\bar\theta, \bar\theta, p_{\max}, p_{\min})}_b\!\!{}^\top. 
\]
Let $J$ be a number of constraints, $J = 2m + 2n$, then $W\in\mathbb{R}^{J\times n}$ and $b\in\mathbb{R}^{n\times 1}$. We refer a feasibility polytope as a set~$\cP$, 
$\cP = \bigl\{p:\, Wp \le b\bigr\} = \bigl\{p:\; \bigcap_{i=1}^J\omega_i^\top p \le b_i\bigr\}$.

Finally, we assume that fluctuations of power injections $p$ are Gaussian, $p = x +\xi$, where $\xi$ is a Gaussian uncertainty, $\xi\sim\cN(0, \Sigma)$ and $x$ is the deterministic part of power injections. 

The paper notation is summarized in Table~\ref{tab:notation}. We use lower indices for coordinates of vectors and matrices, lower-case letters for probability density functions (PDFs), and upper-case letters for cumulative distribution functions (CDFs). When it does not lead to confusion, we use $\mathbb{P}$ and $\mathbb{E}$ to denote probability and expectation without explicitly mentioning a distribution. 

\begin{table}[t]
  \centering
  \begin{tabular}{l|l|l|l}
    $E$ & set of lines & $\nu(p)$ & nominal distribution PDF\\
    $V$ & set of buses & ${\cal V}(p)$ & nominal distribution CDF \\
    $B$ & admittance matrix& $\nu(p,x)$ & parametric distribution CDF\\
    $m$ & number of lines & ${\cal V}(p,x)$& parametric distribution CDF\\
    $n$ & number of buses & $p^{\max}$ & generation upper limits\\
    $p$ & power injections & $p^{\min}$& generation lower limits\\
    $\theta$ & voltage phases & $\mathbb{P}(\cdot)$, $\mathbb{E}(\cdot)$ & probability, expectation\\
    $I_n$ & $n\times n$ identity matrix & $\xi$ & vector of stochastic \\
    $J$ & number of constraints & & power fluctuations\\
    $\bar\theta_{ij}$ & voltage angle limits & ${\cal N}(\mu, \Sigma)$ & Gaussian distribution\\
    $\xi$ & power injection unce-& $\cP$ & feasibility polytope\\ 
     &rtainty, $p\sim\cN(0, \Sigma)$
     & &$\cP = \{p:\bigcap_{i\le J} \omega_i^\top p\le b_i\}$\\
    $p$ & power injections & 
    $x$ & deterministic part of \\
    & $p = x + \xi$ & 
    & power injections, $x = \mathbb{E}_\xi p$
  \end{tabular}
  \caption{Paper notation.}
  \label{tab:notation}
\end{table}

\subsection{Problem Setup}
The joint chance-constrained optimal power flow problem~is:
\begin{align}\label{eq:JCC-OPF}
  \min_x & \;\mathbb{E}_{\xi\sim \cN(0, \Sigma)} \cost(x,\xi)\\
   \st\; & \mathbb{P}_{\xi\sim \cN(0, \Sigma)} (x+\xi \in \cP) \ge 1 - \eta, 0 < \eta \le 1/2,\nonumber
\end{align}
where $\eta$ is a preset confidence parameter, and $\cost$ is a cost function convex in $x$ for any realization of $\xi$. In other words, we assume that power flow balance equations (Eqs.~\eqref{eq:DC-PF-a}) are satisfied almost surely, and the probability that at least one of the security constraints (Eqs.~\eqref{eq:DC-PF-b} and~\eqref{eq:DC-PF-c}) fails is at most $\eta$. 

Notice, that despite the convexity of the cost function Problem~\eqref{eq:JCC-OPF} is non-convex as its feasibility set is non-convex for a sufficiently high level of uncertainty. 

\subsection{Scenario Approach}

Over the last two decades, the scenario approach \cite{nemirovski2006scenario,calafiore2006scenario} 
remains the state-of-the-art method for solving joint chance-constrained optimization. The scenario approach consists in substituting the probabilistic constraints with a larger number of deterministic ones with each constraint standing for some uncertainty realization:
\begin{align}\label{eq:sc-opf}
  \min_x & \; \frac{1}{N} \sum_{t=1}^N \cost(x,\xi_t)\\
  \st & \; p_t^{\min} \le x+\xi^t \le p_t^{\max}, \; 1\le t \le N\nonumber\\
  & \; |\theta_i(\xi^t) - \theta_j(\xi^t)| \le {\bar \theta}_{ij}, (i, j)\in \cE, \; 1\le t \le N\nonumber\\
  & x+\xi^t = B \theta(\xi^t), \; 1\le t \le N\nonumber
\end{align}
where $N$ is a number of scenarios, and $\{\xi^t\}_{i=1}^N$ is a series of uncertainty realization. We assume below that the generation cost, $\cost(x, \xi)$, does not depend on the randomness in power fluctuation, and thus $\cost(x, \xi) = \cost(x)$ for any uncertainty realization $\xi$. Note that dependence of expected cost on known asymptotic parameters of the distribution (variance, mean) are allowed within this framework. We next discuss our method to solve Problem \eqref{eq:sc-opf}.

\section{Algorithm}\label{sec:algo}

\subsection{Idea and Sketch}
The algorithm consists of several steps: constructing an inner approximation to the feasibility set, generating samples outside of the approximation, and, finally, solving the scenario approximation problem \eqref{eq:sc-opf} with a new collection of samples. 

First, using the fact that \emph{the probability of a union of events is bounded from below by the maximum of individual event probabilities}, we construct a lower bound on the probability of constraint feasibility $\mathbb{P}(x+\xi \in \cP)$. The latter allows to add a set of constraints, $x\in \cP_{m}$, so that if $x\not\in \cP_m$ then $\mathbb{P}(x+\xi \in \cP) > \eta$. In other words, if the solution $\bar x$ of the scenario approximation~ \eqref{eq:sc-opf} with samples from the nominal distribution $\cN(0,\Sigma)$ satisfies $\mathbb{P}(x+\xi \in \cP) \ge 1-\eta$, then adding additional inequalities $x\in \cP_{m}$ does not change $\bar x$. 

Second, using the aforementioned bound, we design a polytope $\cP_{in} = \{p: W_{in} \xi \le b_{in}, p = x+\xi\}$ around $x$ with derived $W_{in}$ and $b_{in}$ independent of $x$. Figure~\ref{fig:10} illustrates the idea. Then, we show that for any sample $\xi^t$ and feasible $x$, if $p = x+\xi^t \in \cP_{in}$, then $p$ also necessarily belongs to the constraint feasibility set $\cP$. To this end scenarios $\xi^t: x+\xi^t \in \cP_{in}$ can be eliminated from the optimization problem (see Eq.~\eqref{eq:sc-opf}) without impacting the approximation accuracy. 

\begin{figure}
  \centering
  \begin{tikzpicture}[scale=0.65, node distance={15mm}, thick, main/.style = {draw, scale=.9}] 
  \coordinate (x) at (0, 0);
  \draw (x) circle (1pt);
  \draw (x) ++(0.1, 0.1) node[above right] (tmp) {$x$};
	\coordinate (a) at ( 4.755282581475767 , 1.545084971874737 );
	\coordinate (b) at ( 3.061616997868383e-16 , 5.0 );
	\coordinate (c) at ( -4.755282581475767 , 1.5450849718747375 );
	\coordinate (d) at ( -2.9389262614623664 , -4.045084971874736 );
	\coordinate (e) at ( 2.9389262614623646 , -4.045084971874738 );
  \draw (a) -- (b);
  \draw (b) -- (c);
  \draw (c) -- (d);
  \draw (d) -- (e);
  \draw (e) -- (a);
  \draw
    (a) ++(0.1, 0.01) node[above] (tmp) {$\mathcal{P}$};
    
	\coordinate (ag) at ( 3.804226065180614 , 1.2360679774997896 );
	\coordinate (bg) at ( 2.4492935982947064e-16 , 4.0 );
	\coordinate (cg) at ( -3.804226065180614 , 1.23606797749979 );
	\coordinate (dg) at ( -2.351141009169893 , -3.2360679774997894 );
	\coordinate (eg) at ( 2.3511410091698917 , -3.2360679774997902 );
  \draw [olive] (ag) -- (bg);
  \draw [olive] (bg) -- (cg);
  \draw [olive] (cg) -- (dg);
  \draw [olive] (dg) -- (eg);
  \draw [olive] (eg) -- (ag);
  \draw
    (bg) ++(0.1, 0.01) node[below, olive] (tmp) {$\mathcal{P}_{m}$};
  
	\coordinate (ar) at ( 0.9510565162951535 , 0.3090169943749474 );
	\coordinate (br) at ( 6.123233995736766e-17 , 1.0 );
	\coordinate (cr) at ( -0.9510565162951535 , 0.3090169943749475 );
	\coordinate (dr) at ( -0.5877852522924732 , -0.8090169943749473 );
	\coordinate (er) at ( 0.5877852522924729 , -0.8090169943749476 );
  \draw [red] (ar) -- (br);
  \draw [red] (br) -- (cr);
  \draw [red] (cr) -- (dr);
  \draw [red] (dr) -- (er);
  \draw [red] (er) -- (ar);
  \draw
    (br) ++(1em, .1em) node[above right, red] (tmp) {$\mathcal{P}_{in}$};
  
  \coordinate (b_) at (1, 5.7265);
  \draw
    (b_) node[above right] (tmp) {$\Delta_i$};
  \coordinate (bg_) at (1.5, 5.0898);
  \draw [dashed] (b) -- (b_);
  \draw [dashed] (bg) -- (bg_);
  \draw [thick, black, -latex'] (b_) -- (bg_);
  \draw [thick, black, -latex'] (bg_) -- (b_);
  
  \coordinate (br_) at (-1.5, -0.0898137920080413);
  \draw
    (br_) node[above left] (tmp) {$\Delta_i$};
  \coordinate (x0_) at (-1.0, -0.7265425280053608);
  \draw [dashed] (br) -- (br_);
  \draw [dashed] (x) -- (x0_);
  \draw [thick, black, -latex'] (br_) -- (x0_);
  \draw [thick, black, -latex'] (x0_) -- (br_);
  \end{tikzpicture} 
  \caption{We consider operating point $x$ and derive a polytope $\cP_{m}$ so that no optimal solution of Prob.~ \eqref{eq:JCC-OPF} belongs to $\cP\setminus \cP_{m}$. As it is shown on the figure, $\cP_{m}$ induces a polytope $\cP_{in}$ so that none of the power generations $p\in\cP_{in}$ affect the scenario approximation solution. }
  \label{fig:10}
\end{figure}
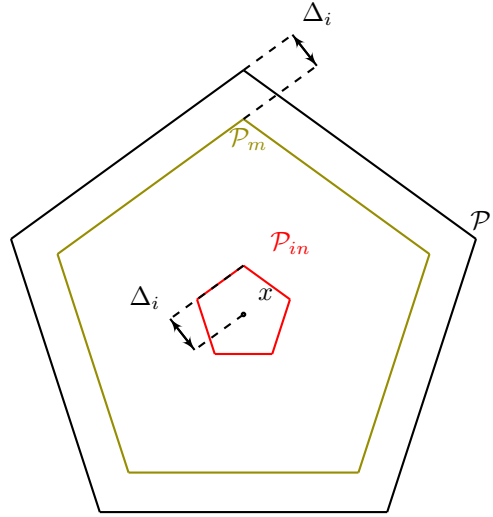

Finally, we sample scenarios outside of the polytope $\cP_{in}$ using the state-of-the-art importance sampling methods \cite{owen2019importance,lukashevich2021power} and solve the Scenario Approximation Problem~\eqref{eq:sc-opf} with the collection of samples generated from importance distribution. Later in this section, we provide rigorous proof of the algorithm efficiency and justify its empirical performance in Section~\ref{sec:emp}. 

\subsection{Inner Approximation}

Consider a probability for the power generation $p$ of being inside the feasibility polytope, $p\in \cP$:
\begin{align*}
  \mathbb{P}&(p\in \cP) = 
  \mathbb{P}(p: Wp \le b) = \\
  & \mathbb{P}\left(p: \bigcap_{i=1}^J \omega_i^\top p \le b_i\right) = 1 - \mathbb{P}\left(p: \bigcup_{i=1}^J \omega_i^\top p > b_i\right)\le\\
  & 1 - \max_{1\le i\le J}\mathbb{P}\left(p: \omega_i^\top p > b_i\right).
\end{align*}

So, if there exists $x$ such that for some $i$, $\mathbb{P}(\omega_i^\top p > b_i) > \eta$, then $\mathbb{P}(p\in \cP) < 1-\eta$. Thus, to satisfy the joint chance constraint for $p = x+\xi$, $\xi\sim \cN(0, \Sigma)$, we need
\begin{align}
  \eta \;\ge\; & \mathbb{P}(\omega_i^\top p > b_i) = \mathbb{P}(\omega_i^\top \xi + \omega_i^\top x > b_i) = \nonumber\\
  & 
  \mathbb{P}\left(\frac{\omega_i^\top \xi}{\|\Sigma^{1/2}\omega\|_2} > \frac{b_i - \omega_i^\top x}{\|\Sigma^{1/2}\omega\|_2}\right) = \nonumber\\
  & \mathbb{P}\left(\zeta > \frac{b_i - \omega_i^\top x}{\|\Sigma^{1/2}\omega\|_2}\right) = \Phi\left(\frac{\omega_i^\top x - b_i}{\|\Sigma^{1/2}\omega\|_2}\right), \label{eq:cnv}
\end{align}
where $\zeta\sim \cN(0,1)$, $\Phi$ is a CDF of the standard normal distribution. Notice, that the function $\Phi(\cdot)$ is convex as soon as its argument is negative, i.e., Ineq.~\eqref{eq:cnv} is convex as soon as~$x\in \cP$. 

A set of inequalities $\eta \ge \Phi\left((\omega_i^\top x - b_i)/(\|\Sigma^{1/2}\omega_i\|_2)\right)$, $1\le i \le J$, defines a polytope $\cP_m$ as follows: 
\begin{align}\label{eq:05}
  \cP_m = \left\{x: \omega_i^\top x \le b_i - \Delta_i\right\},
\end{align}
where $\Delta_i = \|\Sigma^{1/2}\omega_i\|_2 \Phi^{-1}(1-\eta)$, $\eta \le 1/2$. 
Figure~\ref{fig:10} illustrates mutual arrangement of the polytopes $\cP_m \subset\cP$. Note that $\cP_m$ defines an outer approximation of the non-convex chance-constrained feasibility set, which is itself an inner approximation of the constraint feasiblity set without any uncertainty. Eq.~\eqref{eq:05} implies 
Theorem~\ref{thm:10}.
\begin{theorem}\label{thm:10}
  The joint chance-constrained optimal power flow problem~ \eqref{eq:JCC-OPF} has the same set of optimal solutions $X$ as 
  \begin{align}\label{eq:JCC-OPF-Ad}
  \min_x & \;\mathbb{E}_{\xi\sim \cN(0, \Sigma)} \cost(x,\xi)\\
   \st\; & \mathbb{P}_{\xi\sim \cN(0, \Sigma)} (x+\xi \in \cP) \ge 1 - \eta, 0 < \eta \le 1/2,\nonumber\\
   & \; x\in \cP_m\nonumber 
\end{align}
\end{theorem}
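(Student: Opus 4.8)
The plan is to show that adding the constraint $x \in \cP_m$ to Problem~\eqref{eq:JCC-OPF} removes no optimal solutions and introduces no new ones, so that the optimal sets coincide. Since both problems share the same objective $\mathbb{E}_\xi \cost(x,\xi)$ and the same chance constraint, the entire argument reduces to a statement about feasibility: I must prove that every point feasible for the original problem (i.e.\ satisfying the chance constraint) already lies in $\cP_m$. If that containment holds, then the feasible set of~\eqref{eq:JCC-OPF-Ad} is exactly the feasible set of~\eqref{eq:JCC-OPF} intersected with a set that contains it, hence equal, and identical feasible sets under an identical objective yield identical optimal sets $X$.

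First I would take an arbitrary $x$ satisfying the chance constraint $\mathbb{P}_{\xi}(x+\xi \in \cP) \ge 1-\eta$. Using the union-bound chain already derived in the excerpt, namely $\mathbb{P}(x+\xi\in\cP) \le 1 - \max_{1\le i\le J}\mathbb{P}(\omega_i^\top(x+\xi) > b_i)$, feasibility forces $\max_i \mathbb{P}(\omega_i^\top(x+\xi)>b_i) \le \eta$, and therefore $\mathbb{P}(\omega_i^\top(x+\xi)>b_i)\le\eta$ for every single index $i$. Next I would invoke the exact Gaussian evaluation~\eqref{eq:cnv}, which rewrites each per-constraint probability as $\Phi\bigl((\omega_i^\top x - b_i)/\|\Sigma^{1/2}\omega_i\|_2\bigr)$. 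Combining these gives $\Phi\bigl((\omega_i^\top x - b_i)/\|\Sigma^{1/2}\omega_i\|_2\bigr) \le \eta$ for all $i$.

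Then I would finish by applying $\Phi^{-1}$, which is monotone increasing, to both sides. This yields $(\omega_i^\top x - b_i)/\|\Sigma^{1/2}\omega_i\|_2 \le \Phi^{-1}(\eta)$, i.e.\ $\omega_i^\top x \le b_i + \|\Sigma^{1/2}\omega_i\|_2\,\Phi^{-1}(\eta)$. Using the symmetry identity $\Phi^{-1}(\eta) = -\Phi^{-1}(1-\eta)$ together with the definition $\Delta_i = \|\Sigma^{1/2}\omega_i\|_2\,\Phi^{-1}(1-\eta)$ from~\eqref{eq:05}, this is precisely $\omega_i^\top x \le b_i - \Delta_i$ for every $i$, which is the defining system of $\cP_m$. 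Hence $x \in \cP_m$, establishing the containment and thus the theorem.

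The main obstacle I anticipate is not the algebra but the logical direction of the union bound. The inequality $\mathbb{P}(p\in\cP) \le 1 - \max_i \mathbb{P}(\omega_i^\top p > b_i)$ only provides an upper bound on the feasibility probability, so by itself it cannot certify that a point is feasible; what it correctly delivers is a necessary condition for feasibility, and that is exactly the direction I need here. I would therefore be careful to state the argument as: chance-constraint feasibility implies each $\mathbb{P}(\omega_i^\top p > b_i)\le\eta$ implies $x\in\cP_m$, so $\cP_m$ is an \emph{outer} approximation of the feasible set rather than an inner one. The only technical caveat worth flagging is that the equality in~\eqref{eq:cnv} and the convexity of $\Phi$ require the relevant arguments to be in the correct regime ($\eta\le 1/2$ guarantees $\Phi^{-1}(1-\eta)\ge 0$, so the threshold sits in the region where the per-constraint bound behaves as intended), but this is already secured by the standing assumption $0 < \eta \le 1/2$.
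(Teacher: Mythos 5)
Your proof is correct and follows essentially the same route as the paper: the paper's one-line proof simply asserts that the chance-constrained feasible set is contained in $\cP_m$ (relying on the union bound and the Gaussian computation leading to Eq.~\eqref{eq:05} given just before the theorem), and your argument is a careful, fully written-out verification of exactly that containment, including the correct handling of the direction of the union bound and the sign convention $\Phi^{-1}(\eta)=-\Phi^{-1}(1-\eta)$. No gaps.
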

\begin{proof}
Equations $x\in\cP_m$ does not affect the solution of Problem~$\eqref{eq:JCC-OPF}$ as the feasiblity set of the chance-constrained optimization problem exists inside $\cP_m$. 
%
\end{proof}



\subsection{Redundant Scenarios}\label{sec:obs}

Another useful consequence of the fact that the optimal solution of the chance-constrained optimal power flow problem is well separated from the boundary of the polytope $\cP$ is that certain scenarios may be removed as they do not improve the accuracy of scenario approximation \eqref{eq:sc-opf}.

Let the optimal solution $\bar x$ of the problem~\eqref{eq:sc-opf} be feasible for the chance-constrained OPF problem. Then by Theorem~\ref{thm:10}, it necessarily also belongs to $\cP_m$. Mathematically, 

\begin{theorem}\label{thm:20}
Any solution of the Scenario approximation problem~ \eqref{eq:sc-opf} that is feasible for the Chance-constrained OPF problem~ \eqref{eq:JCC-OPF} is also a solution of 
\begin{subequations}
\label{eq:Fin}
  \begin{equation}
  \min_x \; \cost(x)\nonumber
  \end{equation}
  \begin{equation}
  \hspace{-20mm}\st\;\; p_t^{\min} \le x+\xi^t \le p_t^{\max}, \; 1\le t \le N\label{eq:Fin-a}
  \end{equation}
  \begin{equation}
   \hspace{11mm} |\theta_i(\xi^t) - \theta_j(\xi^t)| \le {\bar \theta}_{ij}, (i, j)\in \cE, \; 1\le t \le N\label{eq:Fin-b}
  \end{equation}
  \begin{equation}
  \hspace{-19mm} x+\xi^t = B \theta(\xi^t), \; 1\le t \le N\label{eq:Fin-c}
  \end{equation}
  \begin{equation}
  \hspace{-50mm} x\in \cP_m \label{eq:Fin-d}
  \end{equation}
\end{subequations} 
\end{theorem}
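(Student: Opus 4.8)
The plan is to show that imposing the extra constraint $x\in\cP_m$ in \eqref{eq:Fin} shrinks the feasible set of the scenario problem \eqref{eq:sc-opf} but never discards the candidate solution $\bar x$, so that $\bar x$ remains optimal after the restriction. I would structure the argument as a nested-feasible-set comparison combined with a single appeal to the inner-approximation bound established above (Theorem~\ref{thm:10}).

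First I would name the two feasible sets. Let $F$ be the set of $x$ satisfying the scenario constraints \eqref{eq:Fin-a}--\eqref{eq:Fin-c}, which is exactly the feasible set of \eqref{eq:sc-opf}, and let $F_m=F\cap\cP_m$ be the feasible set of \eqref{eq:Fin}. By construction $F_m\subseteq F$, so the added constraint \eqref{eq:Fin-d} can only remove points; the entire question then reduces to whether $\bar x$ survives the restriction.

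Next I would verify that $\bar x$ does survive, i.e.\ $\bar x\in F_m$. Since $\bar x$ solves \eqref{eq:sc-opf} it satisfies \eqref{eq:Fin-a}--\eqref{eq:Fin-c}, hence $\bar x\in F$. Since $\bar x$ is also feasible for \eqref{eq:JCC-OPF}, we have $\mathbb{P}(\bar x+\xi\in\cP)\ge 1-\eta$; by the union/maximum bound of the inner-approximation subsection this forces $\mathbb{P}(\omega_i^\top(\bar x+\xi)>b_i)\le\eta$ for every $i$, which by \eqref{eq:cnv} reads $\Phi\bigl((\omega_i^\top\bar x-b_i)/\|\Sigma^{1/2}\omega_i\|_2\bigr)\le\eta$ and is precisely the defining condition \eqref{eq:05} of $\cP_m$. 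Thus $\bar x\in\cP_m$, so $\bar x\in F_m$ and \eqref{eq:Fin-d} holds. Equivalently, this is the content of Theorem~\ref{thm:10}: every chance-constraint-feasible point lies in $\cP_m$.

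Finally I would close with the minimality argument over nested sets. Because $\bar x$ minimizes $\cost$ over $F$, we have $\cost(\bar x)\le\cost(x)$ for all $x\in F$, a fortiori for all $x\in F_m\subseteq F$; combined with $\bar x\in F_m$, this shows $\bar x$ attains the minimum of $\cost$ over $F_m$, i.e.\ $\bar x$ solves \eqref{eq:Fin}. I expect no genuine obstacle here: the reasoning is just monotonicity of the optimal value under set inclusion, and the only substantive input is the containment $\{x:\mathbb{P}(x+\xi\in\cP)\ge1-\eta\}\subseteq\cP_m$, which is exactly what Theorem~\ref{thm:10} supplies. The one point deserving care is that the feasibility hypothesis is used twice in different roles — $\bar x\in F$ from solving \eqref{eq:sc-opf}, and $\bar x\in\cP_m$ from chance-constraint feasibility — and both must be invoked to place $\bar x$ in $F_m$.
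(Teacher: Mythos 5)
Your proposal is correct and follows the same route as the paper: the paper's (very terse) justification is precisely that a scenario solution feasible for the chance-constrained problem must lie in $\cP_m$ by Theorem~\ref{thm:10}, so appending constraint~\eqref{eq:Fin-d} does not exclude it, and optimality is preserved by restriction to a subset containing the minimizer. Your version merely makes explicit the nested-feasible-set argument and the chain $\mathbb{P}(\bar x+\xi\in\cP)\ge 1-\eta \Rightarrow \max_i\mathbb{P}(\omega_i^\top(\bar x+\xi)>b_i)\le\eta \Rightarrow \bar x\in\cP_m$ that the paper leaves implicit.
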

Let $ \cP_{in} = \{\xi: \omega_i^\top \xi \le \Delta_i\}$ with $\Delta_i = \|\Sigma^{1/2}\omega_i\| \Phi^{-1}(1-\eta)$. Theorem~\ref{thm:20} implies that for any $x\in \cP_m$, if $\xi^t\in \cP_{in}$, then automatically $p = x+\xi^t \in \cP$. In other words, one can exclude scenario $\xi \in \cP_{in}$ from Problem~ \eqref{eq:sc-opf} as soon as $p = x+\xi \in \cP$. Figure~\ref{fig:10} illustrates the idea and the geometry of $\cP, \cP_m$, and $\cP_{in}$. 

Theorem~\ref{thm:40} follows from the result of Calafiore and Campi~\cite{calafiore2006scenario} and establishes approximation properties of a solution of the Problem~\eqref{eq:Fin}. Assumption~\ref{asmp:10} is the main technical assumption used in the proof of Theorem~\ref{thm:40}.

\begin{assumption}\label{asmp:10}
Assume that for all possible uncertainty realizations $\xi^1, \dots, \xi^N$, the optimization problem \eqref{eq:Fin} is either infeasible or, if feasible, it attains a unique optimal solution.
\end{assumption}

\begin{theorem}\label{thm:40}
Let $\bar x_N$ be a unique solution of the Scenario optimization Problem~\eqref{eq:Fin} with $N$ i.i.d. samples, so that none of the samples belong to $\cP_{in}$. Moreover, assume that for any $N$ the assumption \ref{asmp:10} is fulfilled. Then for any $\delta \in (0,1)$ and any~$\eta \in (0, 1/2]$, $\bar x_N$ is also a solution for the Chance-constrained optimal power flow Problem~\eqref{eq:JCC-OPF} with probability at least $1-\delta$ if 
\begin{align*}
  N \ge \left\lceil 2\frac{(1-\pi)\ln \frac{1}{\delta}}{\eta} + 2d + 2d (1-\pi) \frac{\ln\frac{2(1-\pi)}{\eta}}{\eta} \right\rceil, 
\end{align*} 
where $d$ is a dimension of the space of controllable generators, and $\pi$ is a probability of a random scenario $\xi$ to belong to $\cP_{in}$, and $\pi < 1$. 
\end{theorem}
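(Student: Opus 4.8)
The plan is to reduce the statement to the classical scenario-approach bound of Calafiore and Campi~\cite{calafiore2006scenario}, applied not to the nominal Gaussian law but to its restriction to the complement of $\cP_{in}$, and then to translate the resulting \emph{conditional} feasibility guarantee back into an unconditional one by paying a factor $(1-\pi)$ in the violation level. The whole argument hinges on the substitution $\epsilon = \eta/(1-\pi)$: I expect the quoted sample size to be exactly the Calafiore--Campi bound $N \ge \frac{2}{\epsilon}\ln\frac{1}{\delta} + 2d + \frac{2d}{\epsilon}\ln\frac{2}{\epsilon}$ evaluated at this $\epsilon$ and at confidence $\delta$.

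First I would verify the hypotheses needed to invoke the scenario theorem. Problem~\eqref{eq:Fin} is convex in $x$: the cost is convex by assumption and constraints \eqref{eq:Fin-a}--\eqref{eq:Fin-d} are all linear in $x$ (the membership $x\in\cP_m$ and the DC relations $x+\xi^t = B\theta(\xi^t)$ included). The number of decision variables is $d$, the dimension of the controllable-generation space, which plays the role of the problem dimension in the scenario bound. Assumption~\ref{asmp:10} supplies exactly the feasibility-or-unique-optimum condition under which the non-degenerate form of the Calafiore--Campi theorem holds, so $\bar x_N$ is well defined and the bound is applicable.

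Second I would set up the conditioning. The $N$ samples are, by hypothesis, i.i.d. realizations of $\xi\sim\cN(0,\Sigma)$ restricted to $\{\xi\notin\cP_{in}\}$; call this conditional law $\mathbb{Q}$, and note its total nominal mass is $\mathbb{P}(\xi\notin\cP_{in}) = 1-\pi$. The scenario theorem applied with distribution $\mathbb{Q}$ and level $\epsilon$ guarantees, with confidence at least $1-\delta$, that $\bar x_N$ violates the constraints with $\mathbb{Q}$-probability at most $\epsilon$, i.e.
\[
\mathbb{P}\bigl(\bar x_N+\xi\notin\cP \,\big|\, \xi\notin\cP_{in}\bigr) \le \epsilon .
\]
Now I would use Theorem~\ref{thm:20} and the characterization of $\cP_{in}$: since $\bar x_N\in\cP_m$ and $\xi\in\cP_{in}$ together force $\bar x_N+\xi\in\cP$, the violation event is contained in $\{\xi\notin\cP_{in}\}$, whence
\[
\mathbb{P}\bigl(\bar x_N+\xi\notin\cP\bigr) = (1-\pi)\,\mathbb{P}\bigl(\bar x_N+\xi\notin\cP \,\big|\, \xi\notin\cP_{in}\bigr) \le (1-\pi)\,\epsilon .
\]
Choosing $\epsilon = \eta/(1-\pi)$ makes the right-hand side equal to $\eta$, so $\bar x_N$ is feasible for~\eqref{eq:JCC-OPF} with probability at least $1-\delta$. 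Substituting $\epsilon = \eta/(1-\pi)$ and $\beta=\delta$ into the Calafiore--Campi sample size then yields, term by term, $\frac{2(1-\pi)}{\eta}\ln\frac{1}{\delta} + 2d + \frac{2d(1-\pi)}{\eta}\ln\frac{2(1-\pi)}{\eta}$, and taking the ceiling gives the stated integer $N$.

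The main obstacle I anticipate is not the algebra but the clean justification of the conditioning step: I must argue that discarding (or never generating) samples in $\cP_{in}$ preserves the i.i.d. structure required by the scenario theorem and that the guarantee it returns is genuinely with respect to $\mathbb{Q}$, so that the identity $\mathbb{P}(\cdot) = (1-\pi)\,\mathbb{P}(\cdot\mid \xi\notin\cP_{in})$ is exact rather than merely an upper bound. A secondary point to check is that $\epsilon = \eta/(1-\pi)\le 1$, otherwise the scenario bound is vacuous; this holds whenever $\pi$ is not so large as to push the rescaled level past one, and since a larger $\pi$ only shrinks the required $N$, this is precisely the benign regime the importance-sampling design targets.
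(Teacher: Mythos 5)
Your proposal is correct and follows essentially the same route as the paper: both reduce to the Calafiore--Campi bound applied to the nominal Gaussian conditioned on $\xi\notin\cP_{in}$, use the fact that scenarios inside $\cP_{in}$ cannot cause a violation (so the unconditional violation probability equals $(1-\pi)$ times the conditional one), and substitute $\varepsilon = \eta/(1-\pi)$ into the sample-size formula. Your write-up is in fact somewhat more explicit than the paper's about why the conditioning identity is exact and about the requirement $\eta/(1-\pi)\le 1$, but the underlying argument is identical.
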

\begin{proof}
First, notice that discarding scenarios $\xi^t\sim \cN(0, \Sigma)$ is equivalent in solving the scenario approximation problem with $\xi \sim D \Leftrightarrow \xi\sim \cN(0, \Sigma) \st \xi\not\in \cP_{in}$. 

According to the result of Calafiori and Campi~\cite{calafiore2006scenario}, for any probability $\delta\in (0,1)$ and any confidence threshold probability $\varepsilon$, and dimension of the space of parameters $d$ one has, for $N_1$
\begin{align}\label{eq:mon}
  N_1 \ge \left\lceil \frac{2}{\varepsilon} \ln \frac{1}{\delta} + 2d + \frac{2d}{\varepsilon} \ln \frac{2}{\varepsilon}\right\rceil 
\end{align}
scenarios from $D$ and the optimal solution 
$\bar x$ of the Problem~\eqref{eq:Fin}, 
the probability of failure is bounded as
\begin{align*}
  \mathbb{P}_D(p\not\in \cP) \le \varepsilon
\end{align*}
with probability at least $1-\delta$. 

Notice, that the bounds on the number of samples (see Eq.~\eqref{eq:mon}) is strictly decreasing in $\varepsilon$ for $\varepsilon \in (0,1)$. As scenarios in $\cP_{in}$ do not cause failure, to get a probability of failure $\eta$ according to measure $\cN(0, \Sigma)$, we need the failure probability according to $D$ to be at least $\varepsilon = \eta/(1-\pi)$.  
Thus, taking $\varepsilon = \eta/(1-\pi)$ and using monotonicity of~Ineq.~\eqref{eq:mon} one gets the statement of the theorem. 
\end{proof}
Theorem~\ref{thm:40} establishes the number of scenarios sufficient for the scenario approximation solution being feasible for the chance-constrained optimal power flow problem. This number significantly decreases if one can come up with a sufficiently accurate inner approximation of the feasibility set. Notice, that without an inner approximation, i.e. for $\pi = 0$, one gets the result of \cite[Theorem 1]{calafiore2006scenario}. 

\subsection{Importance Sampling}

Although scenario optimization with scenarios that do not belong to the polytope $\cP_{in}$ obey a nice complexity bound, it requires on average $1/(1-\pi)$ or more samples to generate at least one point outside of $\cP_{in}$ and decreases the overall efficiency of the approach. The problem is especially challenging when dealing with rare events, i.e., the confidence level $\eta \to 0$. 

Importance sampling is a general technique that helps to improve the efficiency of scenario generation~\cite{tokdar2010importance}. It consists of a change of the probability distribution to sample rare events with a higher probability. Figure~\ref{fig:20} illustrates the concept. 


\pgfmathdeclarefunction{gauss}{3}{%
 \pgfmathparse{#3/(#2*sqrt(2*pi))*exp(-((x-#1)^2)/(2*#2^2))}%
}
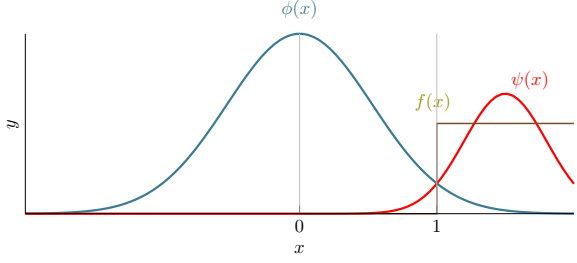
\begin{figure}[ht]
  \centering
\begin{tikzpicture}[scale=0.7]
\begin{axis}[
 no markers, domain=-2:2, samples=100,
 axis lines*=left, xlabel=$x$, ylabel=$y$,
 height=5cm, width=12cm,
 xtick={0,1}, ytick=\empty,
 enlargelimits=false, clip=false, axis on top,
 grid = major
 ]
 \addplot+[very thick,cyan!50!black] {gauss(0,0.53,0.53)} node[above=0.15cm,pos=.5,cyan!50!black] {$\phi(x)$};
 \addplot+[very thick,red] {gauss(1.5,0.3,0.2)} node[above right=0.65cm,pos=0.8,red] {$\psi(x)$};
 \addplot+[const plot, no marks, thick] coordinates {(0,0) (1,0.2) (1,0.2) (2,0.2)} node[above left=0.1cm,pos=0.6,olive] {$f(x)$};
\end{axis}
\end{tikzpicture}
\caption{
  Our goal is to sample points in the area $\{x: f(x) = 1\}$. Sampling from the nominal distribution $\phi(x)$ is inefficient as one needs to sample from the distribution tail. The probability distribution $\psi(x)$ is a better choice as the probability of getting $f(x) = 1$ is substantially higher when sampling from it.}
  \vspace{-5mm}
  \label{fig:20}
\end{figure}

Unfortunately, there is no exact and time-efficient algorithm for sampling outside of a convex polytope from Gaussian distribution \cite{khachiyan1993complexity}; however, the ALOE algorithm~\cite{owen2019importance,lukashevich2021power} proposes an elegant way for approximating the distribution of interest by sampling from a mixture of distributions. 

We consider Gaussian fluctuations of power injections, $\xi \sim \cN(0, \Sigma)$, with known covariance $\Sigma \in \mathbb{R}^{n\times n}$ and aim to sample scenarios outside of $\cP_{in}$ so that the probability distribution to sample from is as close as possible to the conditional Gaussian distribution $\xi \sim \cN(0, \Sigma) \st \xi\not\in \cP_{in}$. 

The method essentially samples from a weighted mixture of conditional Gaussian distributions $D_i$ 
\[\xi \sim D_i \Longleftrightarrow \xi \sim \cN(0, \Sigma) \st \omega_i^\top \xi > \Delta_i. \]

Consider the set of inequalities $\{\omega_i^\top \xi > \Delta_i\}_{i\le J}$ in more detail. First, let $\zeta \sim \cN(0, I_n)$ then the system is equivalent to $\{(\Sigma^{1/2}\omega_i)^\top 
\zeta > \Delta_i\}_{i\le J}$.

Distribution $D_i$ can be simulated exactly using the inverse transform method \cite{l2009monte,morlet1983sampling} that admits conditional sampling $\xi \sim \cN(0, \Sigma)$ s.t. $\omega_i^\top \xi \ge \Delta_i$. 
\begin{enumerate}
  \item Sample $z \sim \cN(0, I)$ and sample $u \sim U(0,1)$
  \item Compute $y = \Phi^{-1}(\Phi(-\Delta_i) + u(1 - \Phi(-\Delta_i)))$
  \item Set $\phi = \phi y + (I - \phi\phi^\top) z$, $\phi = \Sigma^{1/2} \omega_i / \|\Sigma^{1/2} \omega_i\|_2$
  \item Set $\xi = \Sigma^{1/2} \phi$.
\end{enumerate}

In \cite{owen2019importance,lukashevich2021power}, the authors proposed a slightly refined version of the algorithm above that exhibits better numerical stability. We refer to the same papers for the corresponding proofs and analysis. Figure~\ref{fig:conv_vs_MC} illustrates the idea.

Finally, ALOE proposes to sample scenarios from a weighted mixture
\begin{align}
\label{eq:q_d}
  D = \sum_{i=1}^J \alpha_i D_i, \; & \alpha_i \ge 0, \sum_{i=1}^J \alpha_i = 1 \nonumber 
  \\& \alpha_i \propto \Phi(-\Delta_i/\|\Sigma^{1/2}\omega_i\|_2),
\end{align}
where $\Phi$ is a CDF of the standard normal distribution. Let $q_D(\xi)$ be a PDF of distribution $D$, then Theorem~\ref{thm:50} established a maximal ratio of the conditional Gaussian density~$\xi\sim\cN(0, \Sigma) \st \xi\not\in\cP_{in}$ and~$q_D(\xi)$. 
\begin{theorem}\label{thm:50}
Let $p(\xi)$, $q_D(\xi)$ be PDFs of $\xi\sim\cN(0, \Sigma)$ $\st \xi\not\in\cP_{in}$, and a mixture density (see Eq.~\eqref{eq:q_d}) resp. Then for any $\xi\not\in\cP_{in}$, we have
\begin{align}\label{eq:M}
  p(\xi) \le M q_D(\xi), M = \frac{\sum_{i\le J} \Phi(-\Delta_i/\|\Sigma^{1/2}\omega_i\|_2)
  }{\max_{i\le J}\Phi(-\Delta_i/\|\Sigma^{1/2}\omega_i\|_2)}, 
\end{align}
where $D$ and $\alpha_i$ are given in Eq.~\eqref{eq:q_d}.
\end{theorem}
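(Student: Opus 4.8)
The plan is to write both densities explicitly against a common reference --- the unconditional density $g(\xi)$ of $\cN(0,\Sigma)$ --- and show that forming the ratio $p(\xi)/q_D(\xi)$ produces a clean cancellation. First I would reuse the one-dimensional computation already carried out in Eq.~\eqref{eq:cnv}: since $\omega_i^\top\xi\sim\cN(0,\|\Sigma^{1/2}\omega_i\|_2^2)$, the Gaussian mass of each halfspace $H_i=\{\xi:\omega_i^\top\xi>\Delta_i\}$ equals $\beta_i:=\Phi(-\Delta_i/\|\Sigma^{1/2}\omega_i\|_2)$, which is exactly the unnormalized mixing weight appearing in Eq.~\eqref{eq:q_d}. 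The component density is then $d_i(\xi)=g(\xi)\mathbf{1}[\xi\in H_i]/\beta_i$ and the normalized weight is $\alpha_i=\beta_i/\sum_{j\le J}\beta_j$.

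The key step is the cancellation when assembling the mixture. Substituting the above into $q_D=\sum_i\alpha_i d_i$, the factor $\beta_i$ in $\alpha_i$ exactly cancels the normalizer $\beta_i$ of $d_i$, leaving
\begin{align*}
  q_D(\xi)=\frac{g(\xi)}{\sum_{j\le J}\beta_j}\sum_{i\le J}\mathbf{1}[\xi\in H_i]=\frac{g(\xi)\,k(\xi)}{\sum_{j\le J}\beta_j},
\end{align*}
where $k(\xi):=\sum_{i\le J}\mathbf{1}[\xi\in H_i]$ counts how many halfspaces contain $\xi$. Since $\xi\not\in\cP_{in}$ means at least one constraint $\omega_i^\top\xi>\Delta_i$ is violated, we have $k(\xi)\ge1$ on the support of interest. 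Meanwhile the target conditional density is $p(\xi)=g(\xi)\mathbf{1}[\xi\not\in\cP_{in}]/\mathbb{P}(\xi\not\in\cP_{in})$, where $\mathbb{P}(\xi\not\in\cP_{in})=\mathbb{P}(\bigcup_i H_i)$ is the union mass.

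Dividing, the common factor $g(\xi)$ cancels, and for every $\xi\not\in\cP_{in}$
\begin{align*}
  \frac{p(\xi)}{q_D(\xi)}=\frac{\sum_{j\le J}\beta_j}{k(\xi)\,\mathbb{P}(\bigcup_i H_i)}\le\frac{\sum_{j\le J}\beta_j}{\mathbb{P}(\bigcup_i H_i)},
\end{align*}
where the inequality uses $k(\xi)\ge1$. It then remains to lower-bound the union mass, and here I would reuse precisely the inequality the paper already invokes for the inner approximation: $\mathbb{P}(\bigcup_i H_i)\ge\max_{i\le J}\mathbb{P}(H_i)=\max_{i\le J}\beta_i$. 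Substituting yields $p(\xi)/q_D(\xi)\le\sum_{j}\beta_j/\max_i\beta_i=M$, which is exactly the claimed bound.

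The two density-bookkeeping lines are routine; the genuine content is twofold and both pieces are mild. The algebraic subtlety is noticing that the proportionality choice $\alpha_i\propto\beta_i$ in Eq.~\eqref{eq:q_d} is \emph{precisely} what makes the $\beta_i$ normalizers cancel, so that $q_D$ reduces to the overlap count $k(\xi)$ up to a single global constant --- this is what keeps $M$ equal to a simple ratio of mixing weights rather than something that depends on $\xi$ or blows up. The step I would watch most carefully is the replacement $k(\xi)\ge1$: the bound is tight where the halfspaces barely overlap and loose in regions covered by many of them, but since we only need an \emph{upper} bound on the likelihood ratio to control the ALOE importance-sampling estimator, discarding $k(\xi)$ in favor of $1$ is harmless for validity and is where all the slack in $M$ resides.
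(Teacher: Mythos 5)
Your proof is correct and follows essentially the same route as the paper's: write each mixture component as the truncated Gaussian $g(\xi)\mathbf{1}[\xi\in H_i]/\beta_i$, use the choice $\alpha_i\propto\beta_i$ to cancel the normalizers, bound the union mass from below by $\max_i\beta_i$, and take the ratio. Your version is actually more careful than the paper's, since you make explicit the overlap count $k(\xi)\ge 1$ that the paper's one-line ``taking the ratio'' step silently relies on.
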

\begin{proof}
Let $\phi(\xi)$ be PDF of $\xi\sim\cN(0, \Sigma)$. Notice, that the conditional densities $D_i$ have probability density functions 
\[q_{D_i}(\xi) = \begin{cases}
\phi(\xi)/\Phi(-\Delta_i/\|\Sigma^{1/2}\omega_i\|_2), & \omega_i^\top \xi > \Delta_i\\
0, & \text{ otherwise}
\end{cases}
\]
Thus the density of distribution $D$ is $\sum_{i\le J} \alpha_iq_{D_i}(\xi)$. Similarly, density $p(\xi)$ outside of the polytope $\cP_{in}$ is $\phi(\xi)/\mathbb{P}_{\xi\sim \cN(0, \Sigma)}(\xi\not\in \cP_{in})$ which is less or equal then
$\phi(\xi)/\max_{i\le J}\Phi(-\Delta_i/\|\Sigma^{1/2}\omega_i\|_2)$ for any $\xi \not\in \cP_{in}$. 

Finally, taking the ratio of $q_D(\xi)$ and $p(\xi)$ and using the value of $\alpha_i$s, we get the lemma statement.
\end{proof}

Theorem~\ref{thm:50} implies that if a probability of a set with respect to measure $q_D$ is less or equal then $\varepsilon$, then the probability of the same set with respect to measure $p$ does not exceed $M\varepsilon$. 

\begin{figure}[!t]
  \centering
  \includegraphics[width=.47\textwidth]{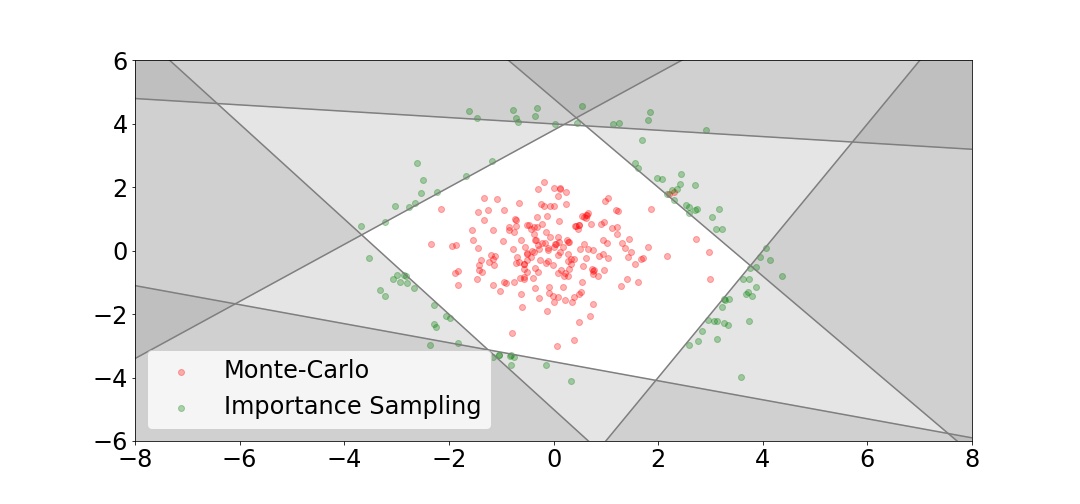}
  \caption{The white area stands for generations that do not exceed operating limits. A set of generations leading to at least one constraint violation is in grey. Two or more reliability constraints are not satisfied in the dark grey area, also samples from a nominal distribution and the constructed mixture marked in red and green respectively}
  \label{fig:conv_vs_MC}
  \vspace{-5mm}
\end{figure}

\subsection{Scenario Approximation with Importance Sampling}

In this section, we present a scenario approximation for the chance-constrained optimal power flow with a set of scenarios generated by the ALOE algorithm~\cite{owen2019importance}. A particular advantage of this approach is that every scenario is generated outside of $\cP_{in}$. The latter substantially improves the accuracy and efficiency of the scenario approximation. In particular, we solve the following optimization problem instead of Problem~\eqref{eq:sc-opf}: 
\begin{subequations} 
\label{eq:FinA}
  \begin{equation}
  \min_x \; \cost(x)\nonumber
  \end{equation}
  \begin{equation}
  \hspace{-20mm}\st\;\; p_t^{\min} \le x+\xi^t \le p_t^{\max}, \; 1\le t \le N\label{eq:FinA-a}
  \end{equation}
  \begin{equation}
   \hspace{11mm} |\theta_i(\xi^t) - \theta_j(\xi^t)| \le {\bar \theta}_{ij}, (i, j)\in \cE, \; 1\le t \le N\label{eq:FinA-b}
  \end{equation}
  \begin{equation}
  \hspace{-19mm} x+\xi^t = B \theta(\xi^t), \; 1\le t \le N\label{eq:FinA-c}
  \end{equation}
  \begin{equation}
  \hspace{-50mm} x\in \cP_m \label{eq:FinA-d}
  \end{equation}
  \begin{equation}
  \hspace{-32mm} \xi^1, \xi^2, \dots, \xi^N\sim D \label{eq:FinA-e},
  \end{equation}
\end{subequations} 
where $D$ is the probability distribution defined by Eq.~\eqref{eq:q_d}. 

Notice, that sampling from distribution $D$ allows to efficiently generate scenarios outside of the polytope $\cP_{in}$. 

However, they follow distribution $D$ instead of $\xi\sim \cN(0, \Sigma) \st \xi\not\in \cP_{in}$. As these distributions are sufficiently close to each other, Theorem~\ref{thm:80} establishes efficient complexity bounds for the scenario approximation with importance sampling. 

\begin{theorem}\label{thm:80}
Let $\bar x_N$ be a unique solution of the Scenario optimization Problem~\eqref{eq:FinA} with $N$ i.i.d. samples follow distribution $D$. Moreover, assume that for any $N$ the assumption \ref{asmp:10} is fulfilled. Then for any $\delta \in (0,1)$ and any~$\eta \in (0, 1/2]$, $\bar x_N$ is also a solution for the Chance-constrained optimal power flow Problem~\eqref{eq:JCC-OPF} with probability at least $1-\delta$ if 
\begin{align*}
  N \ge \left\lceil 2M\frac{(1-\pi)\ln \frac{1}{\delta}}{\eta} + 2d + 2d M(1-\pi) \frac{\ln\frac{2M(1-\pi)}{\eta}}{\eta} \right\rceil, 
\end{align*} 
where $d$ is a dimension of the problem and $\pi$ is a probability of a random scenario $\xi$ to belong to $\cP_{in}$, $\pi < 1$, and constant $M$ is defined by Theorem~\ref{thm:50}. 
\end{theorem}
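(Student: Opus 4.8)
The plan is to mirror the proof of Theorem~\ref{thm:40}, inserting one additional change of measure to account for the fact that the $N$ scenarios are now drawn from the mixture $D$ rather than from the exact conditional law $\xi\sim\cN(0,\Sigma)\,\st\,\xi\not\in\cP_{in}$. Three measures are in play: the mixture $D$ with density $q_D$, from which we actually sample; the exact conditional law with density $p(\xi)$; and the nominal law $\cN(0,\Sigma)$ with density $\phi$. The entire argument reduces to tracking how the failure probability of the returned solution $\bar x_N$ transforms as we pass between these measures.

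First I would invoke the Calafiore--Campi bound (Ineq.~\eqref{eq:mon}) exactly as in Theorem~\ref{thm:40}, but applied to i.i.d.\ samples from $D$: since that bound is agnostic to the sampling distribution, $N\ge N_1(\varepsilon')$ scenarios drawn from $D$ guarantee $\mathbb{P}_D(\bar x_N+\xi\not\in\cP)\le\varepsilon'$ with probability at least $1-\delta$, for any target $\varepsilon'\in(0,1)$.

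Next I would convert this $D$-probability into a nominal probability in two steps. The key observation — the same one underlying Theorem~\ref{thm:20} — is that the constraint $x\in\cP_m$ enforced by~\eqref{eq:FinA-d} forces the entire failure set $A=\{\xi:\,x+\xi\not\in\cP\}$ to lie outside $\cP_{in}$ for every feasible $x$, and in particular for the random optimizer $\bar x_N$; this is precisely the regime where Theorem~\ref{thm:50} applies. Integrating the pointwise bound $p(\xi)\le M q_D(\xi)$ over $A$ gives $\mathbb{P}_p(A)\le M\,\mathbb{P}_D(A)\le M\varepsilon'$. Then, because $p$ is the conditioning of $\cN(0,\Sigma)$ on $\{\xi\not\in\cP_{in}\}$, an event of probability $1-\pi$, and because $A\subseteq\cP_{in}$ complemented, one has $\mathbb{P}_{\cN}(A)=(1-\pi)\,\mathbb{P}_p(A)\le (1-\pi)M\varepsilon'$. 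Since these set inequalities are deterministic and hold for any $x\in\cP_m$, they chain cleanly with the Calafiore--Campi guarantee on the event of probability $1-\delta$.

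Finally I would set $\varepsilon'=\eta/\bigl(M(1-\pi)\bigr)$, which makes the nominal failure probability at most $\eta$ and hence certifies feasibility of $\bar x_N$ for Problem~\eqref{eq:JCC-OPF}. Substituting this $\varepsilon'$ into \eqref{eq:mon} and using that its right-hand side is monotonically decreasing in $\varepsilon'$ on $(0,1)$ yields precisely the stated sample count, reducing to Theorem~\ref{thm:40} when $M=1$. The only delicate point is the bookkeeping of the two successive changes of measure — the importance ratio $M$ from $D$ to $p$ and the conditioning factor $1-\pi$ from $p$ to $\cN$ — together with the prior verification that $A$ genuinely sits outside $\cP_{in}$ so that Theorem~\ref{thm:50} may be invoked; everything else is the direct substitution already performed for Theorem~\ref{thm:40}.
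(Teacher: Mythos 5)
Your proposal is correct and follows essentially the same route as the paper, which only sketches this proof in two sentences (apply the Calafiore--Campi bound to samples from $D$, then use Theorem~\ref{thm:50} to transfer the failure probability from measure $D$ back to the conditional and then the nominal Gaussian measure, exactly as you do via $\varepsilon'=\eta/(M(1-\pi))$). Your write-up actually supplies details the paper omits — in particular the verification that the failure set lies outside $\cP_{in}$ so that the density ratio bound applies — and it gets the direction of the change of measure right where the paper's wording is loose.
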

\begin{proof}
The proof is similar to the one of Theorem~\ref{thm:40}. Application of theorem \ref{thm:50} allows to upper-bound the probability of an event in measure $D$ with respect to its probability in measure $\cN(0,\Sigma) \st \xi\not\in\cP_{in}$. 
\end{proof}

\section{Empirical Study}\label{sec:emp}
We compare the performance of the scenario approximation approaches with and without importance sampling over real and simulated test cases with dimensions varying from several dozens to hundreds of variables. We limit the empirical setting to considering Gaussian distributions and linear feasibility constraints only.

\subsection{Compared Algorithms} 
In this study, we have compared the classical scenario approximation~\cite{calafiore2006scenario} with the importance sampling-based scenario approximation, where the samples are generated with ALOE algorithm~\cite{owen2019importance} from the outside of the polytope $\cP_{in}$ that contains obsolete samples only (see Section~\ref{sec:obs} for details). We omit detailed comparison with other importance sampling strategies \cite{genz2020package,lukashevich2021power,bugallo2017adaptive} when generating scenarios because of the paper space limitation and for the sake of empirical study clarity. 

\subsection{Implementation details} We have used Julia 1.5.3. and PowerModels.jl~\cite{coffrin2018powermodels} on MacBook Pro (2.4GHz, 8-Core Intel Core i9, 64 GB RAM). In the experiments, the computational time for each case has not exceeded 10 minutes, which makes the solution applicable for the operational practice. We will make our code publicly available for research purposes. 

When solving scenario approximation optimization problems, we use CVX~\cite{diamond2016cvxpy} and GLPK optimization solver~\cite{GLPK}. 

\subsection{Test Cases and Numerical Results}

In our experiments we use both synthetic and realistic power flow IEEE test cases \cite{thurner2018pandapower}. 

\paragraph{Synthetic Example}
We first studied efficiency of our algorithm on one dimensional test case:
\begin{align*}
  \max\; & x\\
  \st & \mathbb{P}_\xi(x+\xi \le a) \ge 1-\eta, \xi\sim \cN(0,1)
\end{align*}
for $0 < \eta < 1/2$ and a positive parameter $a$. In this case, the chance-constrained optimization problem admits an exact solution, $x^* = a - \Phi^{-1}(1-\eta)$, the polytopes $\cP_{m}$ and $\cP_{in}$ are $\{x: x\le x^*\}$ and $\xi: \xi \leq \Phi^{-1}(1-\eta)$ respectively. To illustrate the role of an inner approximations $\cP_{m}$ and $\cP_{in}$ we consider different polytopes $\cP_{m} = \{x: x\le b\}$. The latter affects the efficiency of sampling. Figure~\ref{fig:80} illustrates the evolution of scenario approximation without importance with the number of samples and the size of the polytope $\cP_m$.


Although, one can consider an arbitrary polytope $\bar\cP\subset\cP_{in}$ which then will satisfy the conditions of Theorem~\ref{thm:80}. Figure~\ref{fig:80} illustrates the efficiency of the importance sampling approximation for more and less conservative inner polyhedral approximations. 
Notice that a less conservative approximation leads to a less accurate solution we get. To this end, deriving a non-conservative inner approximation is crucial for the importance sampling approach success. 


\begin{figure}[!t]
  \centering
  \vspace{-2mm}
  \includegraphics[width=.45\textwidth]{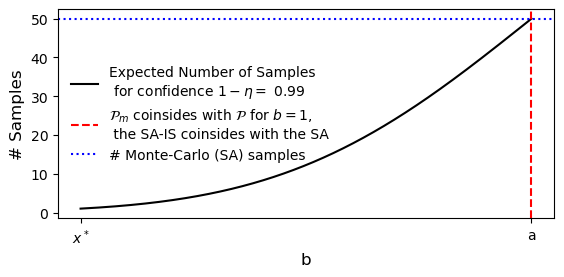}
  \caption{
  The figure demonstrates feasibility of the scenario approximation with importance sampling depending on the size of $\cP_{m}= \{x: x\le b\}$, $b\le a - \Phi^{-1}(1-\eta)$. The more accurate approximation $\cP_m$ (and $\cP_{in}$ resp.) is the less number of samples required by the SA-IS algorithm.}
  \label{fig:80}
  \vspace{-6mm}
\end{figure}

\paragraph{Power grid test cases}

We address the chance-con-
strained DC optimal power flow problem under Gaussian fluctuations of renewables by comparing the algorithms' efficiency in various test cases with up to several hundred buses. We have used DC power flow cases accessible in Matpower~\cite{zimmerman2010matpower}
and PowerModels.jl~\cite{coffrin2018powermodels}.

Table~\ref{tab:emp-ev} summarized the empirical efficiency of the classical scenario approximation and the importance of sampling-based scenario approximation algorithms. For all consider cases (IEEE 30, IEEE 57, and IEEE 118), we assumed the power generation and consumption level to fluctuate with the standard deviation 0.07 of its nominal value. We refer to SA and SA-IS as scenario approximations with and without importance sampling, respectively. 

Our experiments show that with a moderate number of samples the SA approach comes up with a lower cost solution; however, it does not meet security guarantees in out-of sample testing. So the SA method requires a much higher number of samples to deliver constraint satisfaction with the required level of confidence. In contrast, the SA-IS requires much less samples to meet the security constraints with a required probability (validated in out of sample guarantees), with the improvement being significant for higher confidence levels $1-\eta$. Furthermore this incures a very minimal increase in cost, making our approach useful for solving problems with rare events and stricter guarantees. 


\begin{table}[ht]
  \centering
  \begin{tabular}{lllllll}
    \toprule
    Case & Conf.  & DC-OPF & SA & SA-IS & SA & SA-IS\\
       & $1-\eta$ & cost & cost & cost & conf. & conf.\\
    \midrule
    IEEE 30 & 0.95 & 5669 & 5712 & 5735 & 0.86 & \textbf{0.96}\\
    IEEE 30 & 0.99 & 5669 & 5712 & 5760 & 0.86 & \textbf{0.99} \\ 
    IEEE 30 & 0.995 & 5669 & 5712 & 5780 & 0.86 & \textbf{1.00}\\
    \hline
    IEEE 57 & 0.95 & 25016 & 25044 & 25095 & 0.84 & \textbf{0.97}\\
    IEEE 57 & 0.99 & 25016 & 25044 & 25110 & 0.84 & \textbf{1.00}\\ 
    IEEE 57 & 0.995 & 25016 & 25044 & 25165 & 0.84 & \textbf{1.00}\\
    \hline
    IEEE 118 & 0.95 & 84840 & 85607 & 86018 & 0.47 & \textbf{0.96}\\
    IEEE 118 & 0.99 & 84840 & 85607 & 86230 & 0.47 & \textbf{0.99}\\
    IEEE 118 & 0.995 & 84840 & 85607 & 86321 & 0.47 & \textbf{1.00}\\
    \bottomrule
  \end{tabular}
  \caption{Empirical comparison of the scenario approximation approaches with and without importance sampling. In all cases we used \textbf{600} scenarios and solved the problem 50 times over independent samples. SA and SA-AS conf. stand for the average confidence with out of sample test (1000 samples from the nominal uncertainty distribution). }
  \label{tab:emp-ev}
  \vspace{-4mm}
\end{table}




\section{Conclusion}
\label{sec:conclusion}

In this paper, we investigated the scenario approximation for the chance-constrained optimal power flow. We showed that the importance sampling technique used for scenario generation leads to better accuracy and time complexity in theory and practice. Finally, the approach can be extended to automated real-time control of bulk power systems. 



%
\bibliographystyle{IEEEtran}
\bibliography{biblio.bib}

\begin{thebibliography}{10}
\providecommand{\url}[1]{#1}
\csname url@samestyle\endcsname
\providecommand{\newblock}{\relax}
\providecommand{\bibinfo}[2]{#2}
\providecommand{\BIBentrySTDinterwordspacing}{\spaceskip=0pt\relax}
\providecommand{\BIBentryALTinterwordstretchfactor}{4}
\providecommand{\BIBentryALTinterwordspacing}{\spaceskip=\fontdimen2\font plus
\BIBentryALTinterwordstretchfactor\fontdimen3\font minus
  \fontdimen4\font\relax}
\providecommand{\BIBforeignlanguage}[2]{{%
\expandafter\ifx\csname l@#1\endcsname\relax
\typeout{** WARNING: IEEEtran.bst: No hyphenation pattern has been}%
\typeout{** loaded for the language `#1'. Using the pattern for}%
\typeout{** the default language instead.}%
\else
\language=\csname l@#1\endcsname
\fi
#2}}
\providecommand{\BIBdecl}{\relax}
\BIBdecl

\bibitem{ben2002robust}
A.~Ben-Tal and A.~Nemirovski, ``Robust optimization--methodology and
  applications,'' \emph{Mathematical programming}, vol.~92, no.~3, pp.
  453--480, 2002.

\bibitem{ding2016adjustable}
T.~Ding, Z.~Bie, L.~Bai, and F.~Li, ``Adjustable robust optimal power flow with
  the price of robustness for large-scale power systems,'' \emph{IET
  Generation, Transmission \& Distribution}, vol.~10, no.~1, pp. 164--174,
  2016.

\bibitem{sousa2010robust}
A.~A. Sousa, G.~L. Torres, and C.~A. Canizares, ``Robust optimal power flow
  solution using trust region and interior-point methods,'' \emph{IEEE
  Transactions on Power Systems}, vol.~26, no.~2, pp. 487--499, 2010.

\bibitem{lubin2015robust}
M.~Lubin, Y.~Dvorkin, and S.~Backhaus, ``A robust approach to chance
  constrained optimal power flow with renewable generation,'' \emph{IEEE
  Transactions on Power Systems}, vol.~31, no.~5, pp. 3840--3849, 2015.

\bibitem{roald2017chance}
L.~Roald and G.~Andersson, ``Chance-constrained ac optimal power flow:
  Reformulations and efficient algorithms,'' \emph{IEEE Transactions on Power
  Systems}, vol.~33, no.~3, pp. 2906--2918, 2017.

\bibitem{bienstock2014chance}
D.~Bienstock, M.~Chertkov, and S.~Harnett, ``Chance-constrained optimal power
  flow: Risk-aware network control under uncertainty,'' \emph{Siam Review},
  vol.~56, no.~3, pp. 461--495, 2014.

\bibitem{pena2020dc}
A.~Pena-Ordieres, D.~K. Molzahn, L.~A. Roald, and A.~W{\"a}chter, ``Dc optimal
  power flow with joint chance constraints,'' \emph{IEEE Transactions on Power
  Systems}, vol.~36, no.~1, pp. 147--158, 2020.

\bibitem{cousins2014cubic}
B.~Cousins and S.~Vempala, ``A cubic algorithm for computing gaussian volume,''
  in \emph{Proceedings of the twenty-fifth annual ACM-SIAM symposium on
  discrete algorithms}.\hskip 1em plus 0.5em minus 0.4em\relax SIAM, 2014, pp.
  1215--1228.

\bibitem{khachiyan1993complexity}
L.~Khachiyan, ``Complexity of polytope volume computation,'' in \emph{New
  trends in discrete and computational geometry}.\hskip 1em plus 0.5em minus
  0.4em\relax Springer, 1993, pp. 91--101.

\bibitem{nemirovski2007convex}
A.~Nemirovski and A.~Shapiro, ``Convex approximations of chance constrained
  programs,'' \emph{SIAM Journal on Optimization}, vol.~17, no.~4, pp.
  969--996, 2007.

\bibitem{nemirovski2006scenario}
------, ``Scenario approximations of chance constraints,'' in
  \emph{Probabilistic and randomized methods for design under
  uncertainty}.\hskip 1em plus 0.5em minus 0.4em\relax Springer, 2006, pp.
  3--47.

\bibitem{nemirovski2003tractable}
A.~Nemirovski, ``On tractable approximations of randomly perturbed convex
  constraints,'' in \emph{42nd IEEE International Conference on Decision and
  Control (IEEE Cat. No. 03CH37475)}, vol.~3.\hskip 1em plus 0.5em minus
  0.4em\relax IEEE, 2003, pp. 2419--2422.

\bibitem{trofino1999bi}
A.~Trofino and C.~E. De~Souza, ``Bi-quadratic stability of uncertain linear
  systems,'' in \emph{Proceedings of the 38th IEEE Conference on Decision and
  Control (Cat. No. 99CH36304)}, vol.~5.\hskip 1em plus 0.5em minus 0.4em\relax
  IEEE, 1999, pp. 5016--5021.

\bibitem{calafiore2006scenario}
G.~C. Calafiore and M.~C. Campi, ``The scenario approach to robust control
  design,'' \emph{IEEE Transactions on automatic control}, vol.~51, no.~5, pp.
  742--753, 2006.

\bibitem{garatti2019risk}
S.~Garatti and M.~Campi, ``Risk and complexity in scenario optimization,''
  \emph{Mathematical Programming}, pp. 1--37, 2019.

\bibitem{vrakopoulou2013probabilistic}
M.~Vrakopoulou, K.~Margellos, J.~Lygeros, and G.~Andersson, ``A probabilistic
  framework for reserve scheduling and n-1 security assessment of systems with
  high wind power penetration,'' \emph{IEEE Transactions on Power Systems},
  vol.~28, no.~4, pp. 3885--3896, 2013.

\bibitem{hou2020chance}
A.~M. Hou and L.~A. Roald, ``Chance constraint tuning for optimal power flow,''
  in \emph{2020 International Conference on Probabilistic Methods Applied to
  Power Systems (PMAPS)}.\hskip 1em plus 0.5em minus 0.4em\relax IEEE, 2020,
  pp. 1--6.

\bibitem{mezghani2020stochastic}
I.~Mezghani, S.~Misra, and D.~Deka, ``Stochastic ac optimal power flow: A
  data-driven approach,'' \emph{Electric Power Systems Research}, vol. 189, p.
  106567, 2020.

\bibitem{vapnik1999overview}
V.~N. Vapnik, ``An overview of statistical learning theory,'' \emph{IEEE
  transactions on neural networks}, vol.~10, no.~5, pp. 988--999, 1999.

\bibitem{maximov2016tight}
Y.~Maximov and D.~Reshetova, ``Tight risk bounds for multi-class margin
  classifiers,'' \emph{Pattern Recognition and Image Analysis}, vol.~26, no.~4,
  pp. 673--680, 2016.

\bibitem{campi2020scenario}
M.~C. Campi and S.~Garatti, ``Scenario optimization with relaxation: a new tool
  for design and application to machine learning problems,'' in \emph{2020 59th
  IEEE Conference on Decision and Control (CDC)}.\hskip 1em plus 0.5em minus
  0.4em\relax IEEE, 2020, pp. 2463--2468.

\bibitem{sakhavand2020new}
N.~N. Sakhavand, ``New algorithms for stochastic power systems planning and
  operations problems,'' Ph.D. dissertation, University of Texas Arlington,
  2020.

\bibitem{owen2019importance}
A.~B. Owen, Y.~Maximov, M.~Chertkov \emph{et~al.}, ``Importance sampling the
  union of rare events with an application to power systems analysis,''
  \emph{Electronic Journal of Statistics}, vol.~13, no.~1, pp. 231--254, 2019.

\bibitem{lukashevich2021power}
A.~Lukashevich and Y.~Maximov, ``Power grid reliability estimation via adaptive
  importance sampling,'' \emph{IEEE Control Systems Letters}, 2021.

\bibitem{tokdar2010importance}
S.~T. Tokdar and R.~E. Kass, ``Importance sampling: a review,'' \emph{Wiley
  Interdisciplinary Reviews: Computational Statistics}, vol.~2, no.~1, pp.
  54--60, 2010.

\bibitem{l2009monte}
P.~L'Ecuyer and A.~B. Owen, \emph{Monte Carlo and Quasi-Monte Carlo Methods
  2008}.\hskip 1em plus 0.5em minus 0.4em\relax Springer, 2009.

\bibitem{morlet1983sampling}
J.~Morlet, ``Sampling theory and wave propagation,'' in \emph{Issues in
  acoustic Signal processing and recognition}.\hskip 1em plus 0.5em minus
  0.4em\relax Springer, 1983, pp. 233--261.

\bibitem{genz2020package}
A.~Genz, F.~Bretz, T.~Miwa, X.~Mi, F.~Leisch, F.~Scheipl, B.~Bornkamp,
  M.~Maechler, T.~Hothorn, and M.~T. Hothorn, ``Package ‘mvtnorm’,''
  \emph{J. of Computational and Graphical Statistics}, vol.~11, pp. 950--971,
  2020.

\bibitem{bugallo2017adaptive}
M.~F. Bugallo, V.~Elvira, L.~Martino, D.~Luengo, J.~Miguez, and P.~M. Djuric,
  ``Adaptive importance sampling: The past, the present, and the future,''
  \emph{IEEE Signal Processing Magazine}, vol.~34, no.~4, pp. 60--79, 2017.

\bibitem{coffrin2018powermodels}
C.~Coffrin, R.~Bent, K.~Sundar, Y.~Ng, and M.~Lubin, ``Powermodels. jl: An
  open-source framework for exploring power flow formulations,'' in \emph{2018
  Power Systems Computation Conference}.\hskip 1em plus 0.5em minus 0.4em\relax
  IEEE, 2018, pp. 1--8.

\bibitem{diamond2016cvxpy}
S.~Diamond and S.~Boyd, ``{CVXPY}: {A} {P}ython-embedded modeling language for
  convex optimization,'' \emph{Journal of Machine Learning Research}, vol.~17,
  no.~83, pp. 1--5, 2016.

\bibitem{GLPK}
A.~Makhorin, ``Glpk (gnu linear programming kit),'' Available at
  \texttt{http:{/}/www.gnu.org/software/glpk/glpk.html}.

\bibitem{thurner2018pandapower}
L.~Thurner, A.~Scheidler, F.~Sch{\"a}fer, J.-H. Menke, J.~Dollichon, F.~Meier,
  S.~Meinecke, and M.~Braun, ``pandapower—an open-source python tool for
  convenient modeling, analysis, and optimization of electric power systems,''
  \emph{IEEE Transactions on Power Systems}, vol.~33, no.~6, pp. 6510--6521,
  2018.

\bibitem{zimmerman2010matpower}
R.~D. Zimmerman, C.~E. Murillo-S{\'a}nchez, and R.~J. Thomas, ``Matpower:
  Steady-state operations, planning, and analysis tools for power systems
  research and education,'' \emph{IEEE Transactions on power systems}, vol.~26,
  no.~1, pp. 12--19, 2010.

\end{thebibliography}

\end{document}